\newtheorem{theorem}{Theorem}
\newtheorem{proposition}{Proposition}
\newcommand{\Oo}{\mathcal{O}}
\newcommand{\CC}{\mathbb{C}}
\newcommand{\RR}{\mathbb{R}}
\newcommand{\QQ}{\mathbb{Q}}
\newcommand{\NN}{\mathbb{N}}
\newcommand{\ZZ}{\mathbb{Z}}
\newcommand{\Bo}{\mathcal{B}}
\newcommand{\La}{\mathcal{L}}
\begin{document}
\keywords{hyperasymptotic expansions, heat equation, linear PDEs with constant coefficients, $k$-summability}
\subjclass[2010]{35C20, 35G10, 35K05}
\title{Hyperasymptotic solutions for certain partial differential equations}
\author{S{\l}awomir Michalik}
\address{Faculty of Mathematics and Natural Sciences,
College of Science\\
Cardinal Stefan Wyszy\'nski University\\
W\'oycickiego 1/3,
01-938 Warszawa, Poland}
\email{s.michalik@uksw.edu.pl}
\urladdr{\url{http://www.impan.pl/~slawek}}
\author{Maria Suwi\'nska}
\address{Faculty of Mathematics and Natural Sciences,
College of Science\\
Cardinal Stefan Wyszy\'nski University\\
W\'oycickiego 1/3,
01-938 Warszawa, Poland}
\email{m.suwinska@op.pl}

\begin{abstract}
We present the hyperasymptotic expansions for a certain group of solutions of the heat equation.
We extend this result to  a more general case of linear PDEs with constant coefficients. The generalisation is based on the method
of Borel summability, which allows us to find integral representations of solutions for such PDEs.
\end{abstract}

\maketitle

\section{Introduction}
Errors generated in the process of estimating functions by a finite number of terms of their asymptotic expansions usually are of the form 
$\exp(-q/t)$ with $t\to 0$ and usually such a result is satisfactory.
However, it is possible to obtain a refined information by means of finding the hyperasymptotic expansion of a given function, which amounts to expanding remainders of asymptotic expansions repeatedly.\par
More precisely, let us find the asymptotic expansion of a given function $F$. We receive
\begin{equation}
F(t)= A_0+A_1+\ldots\quad\textrm{for}\quad t\to 0\label{asympt_exp}
\end{equation}
with $A_i=a_it^i$. Once we truncate (\ref{asympt_exp}) after a certain amount of terms, we receive an approximation of $F$ and
$$
F(t)=A_0+A_1+\ldots+A_{N_0-1}+R_{N_0}(t).
$$
\par
The optimal value of $N_0=N_0(t)$ can be found by means of minimization of the remainder $R_{N_0}(t)$. After that we consider $R_{N_0}(t)$ as a function of two variables $t$ and $N_0$ and expand it in a new asymptotic series
$$
R_{N_0}(t)=B_0+B_1+\ldots,
$$
which can be truncated optimally after $N_1$ terms. Thus we receive an estimation of $F$ of the form
$$
F(t)=A_0+A_1\ldots+A_{N_0-1}+B_0+B_1+\ldots+B_{N_1-1}+R_{N_1}(t)
$$
and the remainder $R_{N_1}(t)$ appears to be exponentially small compared to $R_{N_0}(t)$.
\par
After repeating the process $n$ times we receive the $n$\textit{-th level hyperasymptotic expansion} of $F$ as $t\to 0$:
$$
F(t)=A_0+\ldots+A_{N_0-1}+B_0+\ldots+B_{N_1-1}+C_0+\ldots C_{N_2-1}+\ldots+R_{N_n}(t).
$$
\par
The concept of hyperasymptotic expansions emerged in 1990 as a topic of an article by M. V. Berry and C. J. Howls \cite{BH}
and it was conceived as a way to estimate the solutions of Schr\"odinger-type equations. Methods of obtaining hyperasymptotic expansions were then
developed mostly by A. B. Olde Daalhuis, who found an expansion for the confluent hypergeometric function \cite{O1a, O1b},
linear ODEs with the singularity of rank one \cite{O2} and various nonlinear ODEs \cite{O3a,O3b}.
\par
Using the results from \cite{O1a} and \cite{O1b}, we will find a hyperasymptotic expansion for a certain group of solutions of the heat equation.
To this end we will first obtain the optimal number of terms, after which the asymptotic expansion of the solution should be truncated.
This will enable us to estimate the remainder using the Laplace method (see \cite{Olv}). The reasoning then will be adapted to the case of
$n$-level hyperasymptotic expansion.
\par
Our main goal is to generalise those results to the case of linear PDEs with constant coefficients. To this end, first we
reduce the general linear PDEs in two variables with constant coefficients to simple pseudodifferential equations using the methods of
\cite{Mic7,Mic8}. Next, we apply the theory of summability, which allows to construct integral representations of solutions of such equations.
Finally, in a similar way to the heat equation, we construct hyperasymptotic expansions for such integral representations of solutions.
\bigskip
\par
Throughout the paper the following notation will be used.

A~sector $S$ in a direction $d\in\RR$ with an opening $\alpha>0$ in the universal covering space $\tilde{\CC}$ of
$\CC\setminus\{0\}$ is defined by $S_d(\alpha):=\{z\in\tilde{\CC}\colon z=r\*e^{i\varphi},\ r>0,\ \varphi\in(d-\alpha/2,d+\alpha/2)\}$.
If the opening $\alpha$ is not essential, the sector $S_d(\alpha)$ is denoted briefly by $S_d$.

We denote by $D_r$ a complex disc in $\CC$ with radius $r>0$ and the center in $0$, i.e. $D_r:=\{z\in\CC:\,|z|<r\}$.
In case that the radius $r$ is not essential, the set $D_r$ will be designated briefly by $D$.

If a function $f$ is holomorphic on a domain $G\subset\CC^n$, then it will be denoted by $f\in\Oo(G)$.
 Analogously, the space of holomorphic functions on a domain $G\subset\CC^n$ with respect to the variable
 $z^{1/\gamma}:=z_1^{1/\gamma_1},\dots,z_n^{1/\gamma_n}$,
 where $1/\gamma:=(1/\gamma_1,\dots,1/\gamma_n)$ and $(\gamma_1,\dots,\gamma_n)\in\NN^n$,
 is denoted by $\Oo_{1/\gamma}(G)$.
 
 By $\partial G$ we mean the boundary of the set $G$.

\section{Hyperasymptotic expansions for the heat equation}
Let us consider the Cauchy problem for the heat equation
\begin{equation}
\left\{ \begin{array}{rcl}
u_{t}(t,z)-u_{zz}(t,z) & = & 0\\
u(0,z) & = & \varphi(z).
\end{array}\right.\label{heat}
\end{equation}
We assume that the function $\varphi$ has finitely many isolated singular points (single-valued and branching points) on $\CC$. Without loss of
generality we may assume that the set of singular points of $\varphi$ is given by 
\begin{multline*}
\mathcal{A}:=\{a_{ij}\in\CC\colon \arg(a_{i1})=\dots=\arg(a_{iL_i})=\lambda_i,\ |a_{i1}|<|a_{i2}|<\ldots<|a_{iL_i}|,\\ j=1,\dots,L_i,\ i=1,\dots,K\},
\end{multline*}
where $K\in\NN$, $L_1,\dots,L_K\in\NN$ and $\lambda_1,\dots,\lambda_K\in\RR$ satisfy $\lambda_1<\dots<\lambda_K$.

Under these conditions we can define the set $H$ as a sum of a finite number of half-lines (see Figure \ref{figure1})
such that $H:=\bigcup_{i=1}^K\{a_{i1}t:\,t\geq 1\}$. So we may assume that $\varphi\in\Oo(\CC\setminus H)$ and $\mathcal{A}$ is
the set of all singular points of $\varphi$.
We denote it briefly by $\varphi\in\Oo_{\mathcal{A}}(\CC\setminus H)$.

Moreover, let us assume that for any $\xi>0$ there exist positive constants $B$ and $C$ such that $|\varphi(z)|\leq Ce^{B|z|^{2}}$ for all
$z\in\CC\setminus H_\xi$, where $H_\xi:=\{z\in\CC\colon \textrm{dist\,}(z,H)<\xi\}$.
We write it $\varphi\in\Oo^2_{\mathcal{A}}(\CC\setminus H)$ for short.
\begin{figure}[htb]
\centering{}\includegraphics[width=0.7\textwidth]{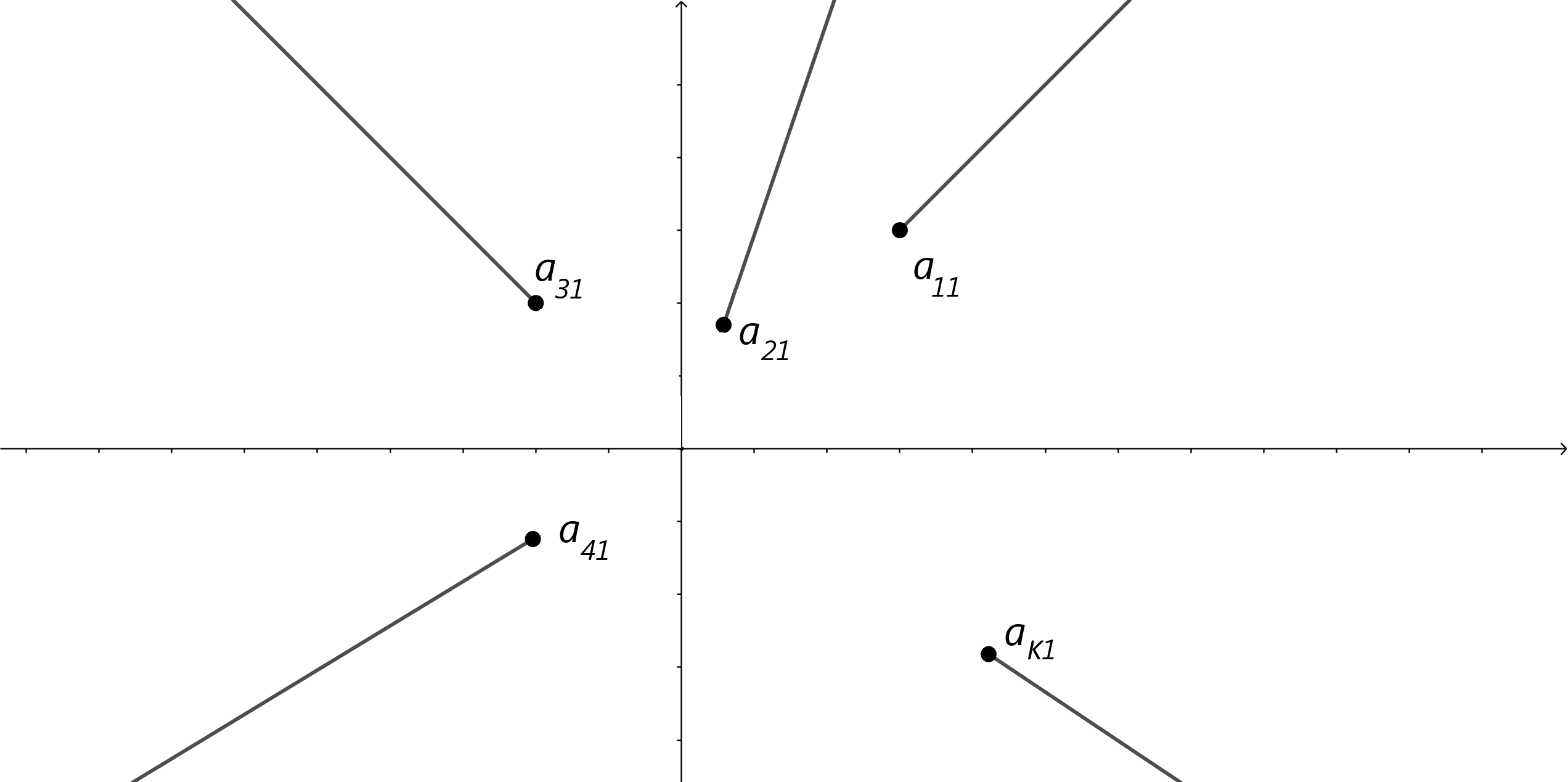}\caption{\label{figure1}}
\end{figure}
\par
The solution of (\ref{heat}) is given by (see \cite[Theorem 4]{Mic-Pod})
\begin{equation}
u(t,z)=\frac{1}{2\sqrt{\pi t}}\int_{e^{i\frac{\theta}{2}}\mathbb{R}}e^{-\frac{s^{2}}{4t}}\varphi(z+s)\,ds\label{heat_solution}
\end{equation}
under condition that $\theta$ is not the Stokes line for $u$ (see \cite[Definition 7]{Mic-Pod},
i.e. $\theta\neq 2\lambda_i\mod 2\pi$ for $i=1,\dots,K$.

To separate from the Stokes lines we fix a small positive number $\delta$ and we assume that
\begin{gather}
 \label{eq:delta}
 |(\theta-2\lambda_i)\mod 2\pi|\geq\delta\quad\textrm{for all}\quad i=1,\dots,K.
\end{gather}
 In other words we assume that 
$\theta\in[0,\,2\pi)\setminus\bigcup_{i=1}^{K}(2\lambda_i-\delta,\,2\lambda_i+\delta)\mod 2\pi$.

Our goal is to find a hyperasymptotic expansion of (\ref{heat_solution}) for $t\to0$
and $\arg t=\theta$ with $z$ belonging to a small neighbourhood
of $0$. To this end we fix a sufficiently small constant $\tilde{\varepsilon}$ such that $\varphi(z)\in\Oo(D_{\tilde{\varepsilon}})$.

\subsection{0-level hyperasymptotic expansion}\label{O_level_exp}
To find the hyperasymptotic expansion of the solution of (\ref{heat}) we will use the method described in \cite{O1a} (see also \cite{O1b})
in the case of the confluent hypergeometric functions. In order to do so let us modify the right-hand side of (\ref{heat_solution}) by
\[
u(t,z)=\frac{1}{4\sqrt{\pi t}}\int_{0}^{e^{i\theta}\infty}e^{-\frac{s}{4t}}s^{-\frac{1}{2}}\left[\varphi\left(z+s^{\frac{1}{2}}\right)+\varphi\left(z-s^{\frac{1}{2}}\right)\right]\,ds.
\]
Replacing $s$ and $t$ by $|s|e^{i\theta}$ and
$|t|e^{i\theta}$, respectively, we obtain
\begin{equation}
u(t,z)=\frac{1}{2\sqrt{\pi|t|}}\int_{0}^{\infty}\frac{e^{-\frac{s}{4|t|}}}{2\sqrt{s}}
\left[\varphi\left(z+e^{i\frac{\theta}{2}}\sqrt{s}\right)+\varphi\left(z-e^{i\frac{\theta}{2}}\sqrt{s}\right)\right]\,ds.\label{heat_subst}
\end{equation}
To find the asymptotic expansion of (\ref{heat_subst}), we will expand the
function
$$f_{0}(s,z):=\frac{1}{2}\left[\varphi\left(z+e^{i\frac{\theta}{2}}\sqrt{s}\right)+\varphi\left(z-e^{i\frac{\theta}{2}}\sqrt{s}\right)\right]$$
around the point $s=0$ using the complex Taylor formula. We receive
\begin{equation}
f_{0}(s,z)=\sum_{k=0}^{N_{0}-1}\frac{\varphi^{\left(2k\right)}\left(z\right)}{\left(2k\right)!}e^{ik\theta}s^{k}+f_{1}(s,z)s^{N_0},\label{f_0_expansion}
\end{equation}
where $f_{1}(s,z)$ is of the form
\begin{equation}
f_{1}(s,z):=\frac{1}{2\pi i}\int_{\Omega_{0}\left(0,\,s\right)}\frac{f_{0}(w,z)}{w^{N_{0}}\left(w-s\right)}\,dw\label{f_0_Taylor}
\end{equation}
and the contour $\Omega_{0}(0,s)$ is a boundary of the sum of two
discs such that all singular points of $f_{0}(w,z)$ are located outside
of those discs and points $0$ and $s$ are both inside. More precisely,
let us take $r:=\min_{1\leq i\leq K}{|a_{i1}|}-\tilde{\varepsilon}.$ In
this case we can put $\Omega_{0}(0,s)$ as
\[
\Omega_{0}(0,s):=\partial\Big( \{ w\in\CC\colon |w|\leq r^{2}-\varepsilon\}\cup\{ w\in\CC\colon |w-s|\leq\frac{\varepsilon}{2}\}\Big) 
\]
for some $\varepsilon\in (0,r^{2}/2)$ and $\varepsilon$ separate from $0$. It is possible to take
such a contour, because by (\ref{eq:delta}) we may choose so small $\tilde{\varepsilon}>0$ that for $z\in D_{\tilde{\varepsilon}}$ 
the singularities $w_{ij}(z):=(a_{ij}-z)^{2}e^{-i\theta}$ of $f_{0}(w,z)$ will never be positive real numbers. 
So we are able to choose $\varepsilon$ satisfying additionally
$$
\varepsilon< \frac{1}{2}\inf_{z\in D_{\tilde{\varepsilon}}}\min_{i=1,\dots K\atop j=1,\dots,L_i}\textrm{dist\,}(w_{ij}(z),\RR_+)
$$
and then $\Omega_{0}(0,s)$ satisfies the desired conditions.
\par
Using (\ref{f_0_expansion}) and basic properties of the gamma function,
we can obtain an expansion of (\ref{heat_subst}) of the form
\begin{equation}
u(t,z)=\sum_{k=0}^{N_{0}-1}\frac{\varphi^{\left(2k\right)}\left(z\right)}{k!}t^{k}+R_{N_{0}}\left(t,z\right),\label{0level}
\end{equation}
where
\begin{equation}
R_{N_{0}}\left(t,z\right)=\frac{1}{2\sqrt{\pi|t|}}\int_{0}^{\infty}e^{-\frac{s}{4|t|}}s^{N_{0}-\frac{1}{2}}f_{1}\left(s,z\right)\,ds.\label{0level_rem}
\end{equation}
\par
Seeing as $|w|\geq r^{2}-\varepsilon$, $|w-s|\geq\frac{\varepsilon}{2}$
and assuming that all the conditions given for the Cauchy datum hold, we
can find the optimal value of $N_{0}=N_{0}(t)$. The first step to
do so is finding an estimation of $f_{1}(s,z).$ Let us note that there exist positive constants $\tilde{A}$ and $\tilde{B}$
such that 
$\left|\varphi\left(z\pm e^{i\frac{\theta}{2}}\sqrt{w}\right)\right|\leq\tilde{A}e^{\tilde{B}s}$
for any $w\in\Omega_{0}(0,s)$, $s>0$ and $z\in D_{\tilde{\varepsilon}}$.  Hence
\begin{multline*}
|f_{1}(s,z)|\leq \frac{1}{2\pi}\int_{\Omega_{0}\left(0,\,s\right)}\frac{2\tilde{A}e^{\tilde{B}s}}{\varepsilon(r^2-\varepsilon)^{N_{0}}}\,d|w|
\leq \frac{2\tilde{A}e^{\tilde{B}s}}{\varepsilon(r^2-\varepsilon)^{N_{0}}}(r^2-\varepsilon+\frac{\varepsilon}{2})\\
\leq
\frac{2\tilde{A}r^{2}e^{\tilde{B}s}}{\varepsilon(r^{2}-\varepsilon)^{N_{0}}}=\frac{A_{0}e^{\tilde{B}s}}{(r^{2}-\varepsilon)^{N_{0}}}
\end{multline*}
for $A_0:=2\tilde{A}r^2/\varepsilon$. As a consequence,
\begin{equation}
|R_{N_{0}}\left(t,z\right)|\leq\frac{A_{0}}{2\sqrt{\pi|t|}}\int_{0}^{\infty}e^{-\frac{s}{4|t|}}s^{N_{0}-\frac{1}{2}}\left(r^{2}-\varepsilon\right)^{-N_{0}}e^{\tilde{B}s}\,ds.\label{RN0-est}
\end{equation}
It is easy to check that the integrand of (\ref{RN0-est}) has a maximum
at a certain point $s=\sigma_{1}$ which satisfies the condition $N_{0}=\sigma_{1}\left(\frac{1}{4|t|}-\tilde{B}\right)+\frac{1}{2},$
and
 so now we can find the point where the minimum with respect to $\sigma_{1}$
of the function given by the formula
\[
\sigma_{1}\mapsto e^{-\sigma_{1}\left(\frac{1}{4|t|}-\tilde{B}\right)}\sigma_{1}^{\sigma_{1}\left(\frac{1}{4|t|}-\tilde{B}\right)}\left(r^{2}-\varepsilon\right)^{-\sigma_{1}\left(\frac{1}{4|t|}-\tilde{B}\right)-\frac{1}{2}}
\]
is attained. This function is minimal at $\sigma_{1}=r^{2}-\varepsilon$.
Because of these facts we can choose the optimal
$N_{0}:=\left\lfloor(r^{2}-\varepsilon)\left(\frac{1}{4|t|}-\tilde{B}\right)+\frac{1}{2}\right\rfloor$,
where by $\lfloor\cdot\rfloor$ we denote the integer part of a real number. Next, we take
$\sigma_1:=\frac{N_0-\frac{1}{2}}{\frac{1}{4|t|}-\tilde{B}}$. Thus $\sigma_1\leq r^2-\varepsilon$.
\par
Thanks to that we are able to use the Laplace method, described at length in \cite{Olv}, and to estimate the right-hand side of (\ref{RN0-est}).
So, we conclude that
\begin{gather*}
\left|R_{N_{0}}\left(t,\,z\right)\right|\sim O\left(\frac{e^{-\sigma_{1}\left(\frac{1}{4|t|}-\tilde{B}\right)}}{\sqrt{1-4\tilde{B}|t|}}\right)
\quad\textrm{for}\quad
t\to 0,\quad \arg t=\theta.
\end{gather*}

\subsection{n-level hyperasymptotic expansion}
When the $n$-level asymptotic expansion is known, it is easy to compute
the ($n+1$)-level expansion using the method presented in Section \ref{O_level_exp}.
\par
Observe that the remainder obtained in the $n$-level hyperasymptotic
expansion is of the form
\begin{equation}
{\displaystyle R_{N_{n}}(t,\,z)=\frac{1}{2\sqrt{\pi|t|}}\int_{0}^{\infty}e^{-\frac{s}{4|t|}}s^{N_{0}-\frac{1}{2}}(s-\sigma_{1})^{N_{1}}\cdot\ldots\cdot(s-\sigma_{n})^{N_{n}}f_{n+1}(s,z)\,ds}\label{RNn}
\end{equation}
where 
\[
f_{n+1}(s,z):=\frac{1}{2\pi i}\int_{\Omega_{n}\left(\sigma_{n},\,s\right)}\frac{f_{n}(w,z)}{(w-\sigma_{n})^{N_{n}}\left(w-s\right)}\,dw
\]
and 
$$\Omega_{n}(\sigma_{n},s):=
\partial\Big(\{ w\in\CC\colon |w-\sigma_{n}|\leq d(\sigma_{n},\theta)-\rho_{n}\varepsilon\}\cup
\{ w\in\CC\colon |w-s|\leq 2^{-n-1}\varepsilon\}\Big)
$$
with $d(w,\theta):=\inf_{z\in D_{\tilde{\varepsilon}}}\inf_{\zeta\in H}|w-e^{-i\theta}(z-\zeta)^{2}|$
and $\rho_{n}:=2-2^{-n}$. The contour is chosen in this way so that,
when we express $f_{n+1}$ in terms of $f_{0}$, that is as a multiple
integral of the form
$$
\begin{array}{rcl}
\displaystyle f_{n+1}(s,z)&=&\frac{1}{(2\pi i)^{n+1}}\int_{\Omega_{n}\left(\sigma_{n},s\right)}\int_{\Omega_{n-1}\left(\sigma_{n-1},\,x_{n}\right)}\ldots\int_{\Omega_{0}\left(0,\,x_{1}\right)}f_{0}(x_{0},z)\cdot\\
&&\displaystyle\cdot\frac{1}{{x_0^{N_0}\prod_{k=1}^{n}\left[\left(x_{k}-\sigma_{k}\right)^{N_{k}}\left(x_{k-1}-x_{k}\right)\right]\left(x_{n}-s\right)}}\,dx_{0}\ldots dx_{n}.
\end{array}
$$
We show that all the singular points of $x_0\mapsto f_0(x_0,z)$ are outside of the area surrounded by $\Omega_n(\sigma_n,s)$,
$\Omega_{n-1}(\sigma_{n-1},x_n)$, ..., $\Omega_0(0,x_1)$. To this end we take $x_k\in\Omega_k(\sigma_k,x_{k+1})$ for $k=0,\dots,n$ with the notation
$\sigma_0:=0$ and $x_{n+1}:=s$. It is sufficient to prove that $d(x_0,\theta)\geq\varepsilon$. There are two possibilities.

In the first case
$|x_k-x_{k-1}|=2^{-k-1}\varepsilon$ for $k=1,\dots,n$. Then
\[
|x_{0}-s|\leq\sum_{k=0}^{n}|x_{k+1}-x_{k}|\leq\sum_{k=0}^{n}2^{-k-1}\varepsilon=(1-2^{-n-1})\varepsilon.
\]
Since $d(s,\theta)\geq 2\varepsilon$ we get $d(x_0,\theta)\geq d(s,\theta)-|x_0-s|\geq\varepsilon$.

In the second case there exists $m\in\{1,\dots,n\}$ such that $|x_k-x_{k-1}|=2^{-k-1}\varepsilon$ for $k=1,\dots,m-1$ and
$|x_m-\sigma_m|=d(\sigma_m,\theta)-\rho_{m}\varepsilon$. Hence $|x_0-x_m|\leq (1-2^{-m})\varepsilon$ and
$d(x_m,\theta)\geq d(\sigma_m,\theta)-|x_m-\sigma_m|\geq \rho_{m}\varepsilon$, so we conclude that
$d(x_0,\theta)\geq d(x_m,\theta)-|x_0-x_m|\geq\varepsilon$.
\par
Using the same algorithm as in the case of the $0$-level expansion, we
can estimate $R_{N_{n}}(t,z)$ as follows
\begin{eqnarray}
|R_{N_{n}}(t,z)|&\leq&\frac{A_{n}}{2\sqrt{\pi|t|}}\int_{0}^{\infty}e^{-\frac{s}{4|t|}+\tilde{B}s}\frac{s^{N_{0}-\frac{1}{2}}}{\left(r^{2}-\varepsilon\right)^{N_{0}}}\cdot\frac{|s-\sigma_{1}|^{N_{1}}}{\left(d(\sigma_{1},\theta)-\rho_{1}\varepsilon\right)^{N_{1}}}\cdot\ldots\cdot\nonumber\\
&&\qquad\cdot\frac{|s-\sigma_{n}|^{N_{n}}}{\left(d(\sigma_{n},\theta)-\rho_{n}\varepsilon\right)^{N_{n}}}\,ds\label{RNn_est}
\end{eqnarray}
for a certain constant $A_{n}$.
\par
Next, we find points where the integrand on the right-hand side of
(\ref{RNn_est}) attains its local maxima. Let us observe that this
function has $n+1$ maxima in points $s_{1},\ldots,\,s_{n+1}$ such
that $s_{1}<\sigma_{1}<s_{2}<\ldots<\sigma_{n}<s_{n+1}$ and all $s_{j}$
satisfy the condition:
\begin{equation}
\frac{1}{4|t|}-\tilde{B}=\frac{N_{0}-\frac{1}{2}}{s_{j}}+\frac{N_{1}}{s_{j}-\sigma_{1}}+\ldots+\frac{N_{n}}{s_{j}-\sigma_{n}}.\label{cond}
\end{equation}
From (\ref{cond}) we conclude that  $s_{j}$ are decreasing functions of $N_{n}$
for $1\leq j\leq n$ and $s_{n+1}$ increases with respect to $N_{n}$.
Moreover, the value of the integrand in (\ref{RNn_est}) in the points $s_j$, $1\leq j\leq n$, decreases with respect to $N_{n}$. However, it behaves differently in the point $s_{n+1}$. It decreases with respect to $N_{n}$
when 
\[
{\displaystyle {\displaystyle \frac{s_{n+1}-\sigma_{n}}{d(\sigma_{n},\theta)-\rho_{n}\varepsilon}}}<1,
\]
 that is for $s_{n+1}\in(\sigma_{n},\:\sigma_{n}+d(\sigma_{n},\theta)-\rho_{n}\varepsilon)$,
and increases when $s_{n+1}\in(\sigma_{n}+d(\sigma_{n},\theta)-\rho_{n}\varepsilon,\,+\infty)$.
Hence there exist $N_{n}\in\mathbb{N}$ and $s_{n+1}=:\sigma_{n+1}$ satisfying (\ref{cond}) for which the integrand reaches
its minimal value (see \cite{O1b}).
\par
Again, we can use the Laplace method to obtain the estimation of $R_{N_n}$ (compare \cite{O1a} and \cite{O1b})
\begin{gather*}
|R_{N_{n}}(t,z)|\sim O\left(\frac{e^{-\eta_{n}\left(\frac{1}{4|t|}-\tilde{B}\right)}}{\sqrt{1-4\tilde{B}|t|}}\right)
\quad\textrm{for}\quad t\to 0,\quad \arg t=\theta.
\end{gather*}
We have the sequence of positive numbers $\eta_0=\sigma_1\sim r^2<\eta_1<\eta_2<\eta_3<\dots$, but it is not clear, whether or not,
$\{\eta_n\}_{n\in\NN}$
is an unbounded sequence (see \cite{O1a} and \cite{O1b}). 
\par
To find the $(n+1)$-level hyperasymptotic expansion we expand the function $s\mapsto f_{n+1}(s,z)$
around the point $\sigma_{n+1}$. As a result we receive a
series
\begin{equation}
\label{eq:f_n1}
f_{n+1}(s,z)=\sum_{j=0}^{N_{n+1}-1}b_{n+1,j}(z)(s-\sigma_{n+1})^{j}+(s-\sigma_{n+1})^{N_{n+1}}f_{n+2}(s,z),
\end{equation}
which, after substituting it in (\ref{RNn}), gives us the ($n+1$)-level
expansion of the form
\begin{eqnarray*}
R_{N_{n}}(t,z)&=&\frac{1}{\sqrt{\pi|t|}}\sum_{j=0}^{N_{n+1}-1}b_{n+1,j}(z)\int_{0}^{\infty}\frac{e^{-\frac{s}{4|t|}}s^{N_{0}}}{2\sqrt{s}}
(s-\sigma_{1})^{N_{1}}\cdot\ldots\cdot\nonumber\\
&&\qquad\qquad\qquad\qquad\cdot(s-\sigma_{n})^{N_{n}}(s-\sigma_{n+1})^{j}ds+R_{N_{n+1}}(t,z).
\end{eqnarray*}
Moreover, since 
\begin{equation}
\label{eq:polynomial}
s^{N_{0}}(s-\sigma_{1})^{N_{1}}\cdots(s-\sigma_{n})^{N_{n}}(s-\sigma_{n+1})^{j}=\sum_{l=0}^{N_0+\cdots+N_n+j}a_{n,j,l}s^l
\end{equation}
is a polynomial of degree $N_0+\cdots+N_{n}+j$, and by the properties of the gamma function
\begin{gather*}
 \frac{1}{\sqrt{\pi|t|}}\int_0^{\infty}\frac{e^{-\frac{s}{4|t|}}}{2\sqrt{s}}s^{l}\,ds=\frac{(2l)!}{l!}|t|^l
 =\frac{(2l)!}{l!}e^{-i\theta l}t^l  \quad\textrm{for}\quad l=0,1,\dots,
\end{gather*}
we conclude that
$$
R_{N_{n}}(t,z)=\sum_{j=0}^{N_{n+1}-1}b_{n+1,j}(z)\sum_{l=0}^{N_0+\cdots+N_n+j}\frac{(2l)!}{l!}a_{n,j,l}e^{-i\theta l}t^l+
R_{N_{n+1}}(t,z).
$$
Hence the hyperasymptotic expansion of $u$ takes the form
\begin{equation}
\label{eq:exp_1}
u(t,z)=\sum_{l=0}^{N_0+\cdots+N_n-1}\psi_l(z)t^l+R_{N_{n}}(t,z)
\end{equation}
for some functions $\psi_l(z)$ depending on $b_{n+1,j}(z)$ and $a_{n,j,l}$.

\subsection{Conclusion}
We can formulate the following theorem regarding the hyperasymptotic expansion of (\ref{heat})
\begin{theorem}
For any $n\in\mathbb{N}$ the solution (\ref{heat_solution}) of the heat equation has the hyperasymptotic expansion as $t\to 0$ in the direction
$\theta\in[0,\,2\pi)\setminus\bigcup_{i=1}^{K}(2\lambda_i-\delta,\,2\lambda_i+\delta) \mod 2\pi$ of the form
\begin{multline*}
u(t,z)=\sum_{j=0}^{N_{0}-1}\frac{\varphi^{\left(2j\right)}\left(z\right)}{j!}t^{j}+
\sum_{m=1}^{n}\sum_{j=0}^{N_{m}-1}\frac{b_{m,j}(z)}{\sqrt{\pi|t|}}\int_{0}^{\infty}\frac{e^{-\frac{s}{4|t|}}s^{N_{0}}}{2\sqrt{s}}
(s-\sigma_{1})^{N_{1}}\\
\cdots(s-\sigma_{m-1})^{N_{m-1}}(s-\sigma_{m})^{j}ds+R_{N_{n}}(t,z)
=\sum_{l=0}^{N_0+\cdots+N_n-1}\psi_l(z)t^l+R_{N_{n}}(t,z),
\end{multline*}
where the remainder $R_{N_{n}}(t,z)$ is of the form
\begin{equation*}
 R_{N_{n}}(t,\,z)=\frac{1}{2\sqrt{\pi|t|}}\int_{0}^{\infty}e^{-\frac{s}{4|t|}}s^{N_{0}-\frac{1}{2}}(s-\sigma_{1})^{N_{1}}
 \cdot\ldots\cdot(s-\sigma_{n})^{N_{n}}f_{n+1}(s,z)\,ds
\end{equation*}
and for any $m\leq n$ and $j<N_{m}$
\begin{multline*}
f_m(s,z)=\frac{1}{(2\pi i)^m}\int_{\Omega_{m-1}(\sigma_{m-1},s)}\int_{\Omega_{m-2}(\sigma_{m-2},x_{m-1})}\cdots
 \int_{\Omega_{1}(\sigma_{1},x_2)}\int_{\Omega_{0}(0,x_1)}\\
   \frac{f_{0}(x_{0},z)\,dx_0\dots dx_{m-1}}{{x_0^{N_0}
   \displaystyle\prod_{k=1}^{m-1}\left[\left(x_{k}-\sigma_{k}\right)^{N_{k}}\left(x_{k-1}-x_{k}\right)\right]\left(x_{m-1}-s\right)}}
\end{multline*}
and
$$
b_{m,j}(z)=\frac{1}{j!}\frac{\partial^{j}}{\partial s^{j}}f_{m}(s,z)\left|_{s=\sigma_{m}}.\right.
$$
\par
Moreover,
$$
|R_{N_{n}}(t,z)|\sim O\left(\frac{e^{-\eta_{n}\left(\frac{1}{4|t|}-\tilde{B}\right)}}{\sqrt{1-4\tilde{B}|t|}}\right)\quad\textrm{as}\quad
t\to 0,\ \arg t=\theta,\ z\in D_{\tilde{\varepsilon}}
$$
for some sequence of positive numbers $\eta_0=\sigma_1\sim r^2<\eta_1<\eta_2<\eta_3<\dots$.
\end{theorem}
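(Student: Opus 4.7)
The plan is to establish the theorem by induction on the hyperasymptotic level $n$, assembling the pieces already constructed in Section \ref{O_level_exp} and in the $n$-level discussion preceding the statement. The base case $n=0$ is exactly what was carried out in Section \ref{O_level_exp}: identity (\ref{0level}) gives the expansion, (\ref{0level_rem}) is the integral form of the remainder, the formula (\ref{f_0_Taylor}) for $f_1$ coincides with the claimed $m=1$ case of the iterated-integral formula (with the convention $\sigma_0=0$ and an empty product), and the Laplace method applied to (\ref{RN0-est}) delivers the estimate with $\eta_0=\sigma_1$. The definition $b_{1,j}(z)=\frac{1}{j!}\partial_s^j f_1(s,z)|_{s=\sigma_1}$ is a tautology on Taylor coefficients.

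For the inductive step from $n$ to $n+1$, assume the statement at level $n$, so that $R_{N_n}(t,z)$ is given by (\ref{RNn}) with $f_{n+1}$ being the claimed iterated integral. Applying the complex Taylor formula to $s\mapsto f_{n+1}(s,z)$ around $\sigma_{n+1}$ yields (\ref{eq:f_n1}) with
\[
f_{n+2}(s,z)=\frac{1}{2\pi i}\int_{\Omega_{n+1}(\sigma_{n+1},s)}\frac{f_{n+1}(w,z)}{(w-\sigma_{n+1})^{N_{n+1}}(w-s)}\,dw,
\]
and substituting the inductive formula for $f_{n+1}$ under this outer contour integral produces the $(n+2)$-fold integral representation required at level $n+1$. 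Plugging (\ref{eq:f_n1}) back into (\ref{RNn}), expanding the polynomial factor as in (\ref{eq:polynomial}) and invoking
\[
\frac{1}{\sqrt{\pi|t|}}\int_0^\infty\frac{e^{-s/(4|t|)}}{2\sqrt{s}}s^l\,ds=\frac{(2l)!}{l!}e^{-i\theta l}t^l
\]
converts the polynomial part of $R_{N_n}$ into a finite sum of monomials in $t$, the remainder being exactly $R_{N_{n+1}}(t,z)$ in the prescribed integral form. Combining this with the level-$n$ expansion gives the double display for $u(t,z)$.

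The main obstacle is the exponential estimate of $R_{N_{n+1}}(t,z)$, which requires two separate inputs. First, one must verify that the iterated contours $\Omega_0,\ldots,\Omega_{n+1}$ simultaneously enclose the base points while avoiding all singularities $w_{ij}(z)$ of $x_0\mapsto f_0(x_0,z)$; this is exactly the geometric argument already supplied in the $n$-level discussion, using $d(\cdot,\theta)$ and $\rho_k=2-2^{-k}$ and splitting into the two cases depending on whether every $|x_k-x_{k-1}|$ equals $2^{-k-1}\varepsilon$ or some intermediate radius $|x_m-\sigma_m|$ equals $d(\sigma_m,\theta)-\rho_m\varepsilon$; in either case $d(x_0,\theta)\geq\varepsilon$, yielding a uniform majorant of the form $A_{n+1}e^{\tilde B s}\prod_{k=0}^{n+1}(d(\sigma_k,\theta)-\rho_k\varepsilon)^{-N_k}$ on $f_{n+1}$, hence the bound (\ref{RNn_est}) at level $n+1$. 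Second, the critical-point equation (\ref{cond}) together with the monotonicity analysis preceding the theorem (decrease in $N_{n+1}$ at $s_1,\dots,s_{n+1}$ and unimodal behaviour in $s_{n+2}$) selects the optimal pair $(N_{n+1},\sigma_{n+2})$; a direct application of the Laplace method to (\ref{RNn_est}) then yields the factor $\exp(-\eta_{n+1}(\frac{1}{4|t|}-\tilde B))/\sqrt{1-4\tilde B|t|}$ with $\eta_{n+1}>\eta_n$, closing the induction.
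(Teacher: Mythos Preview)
Your proposal is correct and follows essentially the same route as the paper: the theorem is stated there as a ``Conclusion'' summarising Sections \ref{O_level_exp} and 2.2, and your inductive repackaging of those sections---base case from (\ref{0level})--(\ref{RN0-est}), inductive step via (\ref{eq:f_n1}), the contour-avoidance argument for the $\Omega_k$, the critical-point analysis (\ref{cond}), and the Laplace method---reproduces that material faithfully. One small slip: in your estimate paragraph the uniform majorant $A_{n+1}e^{\tilde B s}\prod_{k=0}^{n+1}(d(\sigma_k,\theta)-\rho_k\varepsilon)^{-N_k}$ bounds $f_{n+2}$, not $f_{n+1}$, since $R_{N_{n+1}}$ is the integral containing $f_{n+2}$.
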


\section{Generalisation to linear PDEs with constant coefficients}
In this section we show how to find the hyperasymptotic expansion for solutions of general linear non-Cauchy-Kowalevskaya type
PDEs with constant coefficients. The result is based on the theory of summability which allows us to construct the actual solution,
which is analytic
in some sectorial neighbourhood of the origin, from the divergent formal power series solution. Moreover this actual solution has an integral
representation in the similar form to (\ref{heat_solution}).

\subsection{Summability}
First, we define $k$-summability in a similar way to \cite{Mic-Pod}.
For more information about the theory of summability we refer the reader to \cite{Balser}.

We say that a formal power series $\hat{u}(t,z)=\sum_{n=0}^{\infty}\frac{u_n(z)}{n!}t^n$ with $u_n(z)\in\Oo_{1/\kappa}(D)$
is a \emph{Gevrey series} of order $q$ if there exist
$A,B,r>0$ such that $|u_n(z)|\leq A B^n(n!)^{q+1}$ for every $|z|<r$ and every $n\in\NN$.
We denote by $\Oo_{1/\kappa}(D)[[t]]_q$ the set of such formal power series.
\par
Moreover, for $k>0$ and $d\in\RR$, we say that $\hat{u}(t,z)\in \Oo_{1/\kappa}(D)[[t]]_{\frac{1}{k}}$ is \emph{$k$-summable in a direction $d$}
if its $k$-Borel transform
$$v(s,z):=(\Bo_k\hat{u})(s,z):=\sum_{n=0}^{\infty}\frac{u_n(z)}{\Gamma(1+(1+1/k)n)}s^n,$$
where $\Gamma(\cdot)$ denotes the gamma function,
is analytically
continued with respect to $s$ to an unbounded sector $S_d$ in a direction $d$ and this analytic continuation has exponential growth of order $k$
as $s$ tends to infinity (i.e. $|v(s,z)|\leq Ae^{B|s|^k}$ as $s\to\infty$). We denote it briefly by
$v(s,z)\in\Oo_{1,1/\kappa}^k((D\cup S_d)\times D)$. In this case the \emph{$k$-sum of $\hat{u}(t,z)$ in the direction $d$}
is given by
$$
u^d(t,z):=(\La_{k,d}v)(t,z):=t^{-k/(1+k)}\int_{e^{id}\RR_+}v(s,z)\,C_{(k+1)/k}((s/t)^{\frac{k}{1+k}})\,ds^{\frac{k}{1+k}},
$$
where $C_{\alpha}(\tau)$ is the \emph{Ecalle kernel} defined by
 $$
C_{\alpha}(\tau):=\sum_{n=0}^{\infty}\frac{(-\tau)^n}{n!\,\*\Gamma\bigl(1-\frac{n+1}{\alpha}\bigr)}.
$$

\subsection{Reduction of linear PDEs with constant coefficients to simple pseudodifferential equations}
We consider the Cauchy problem
\begin{equation}
 \label{eq:pde}
 \begin{cases}
  P(\partial_{t},\partial_{z})u=0&\\
  \partial_{t}^j u(0,z)=\varphi_j(z)\in\Oo_{\mathcal{A}}(\CC\setminus H),
 \end{cases}
\end{equation}
where
$
P(\lambda,\zeta):=P_0(\zeta)\lambda^m-\sum_{j=1}^m P_j(\zeta)\lambda^{m-j}
$
is a general polynomial of two variables, which is of
order $m$ with respect to $\lambda$.
\par
First, we show how to use the methods from \cite{Mic7,Mic8} for the reduction of (\ref{eq:pde}) to simple pseudodifferential equations.

If  $P_0(\zeta)$ is not a constant, then a formal solution of (\ref{eq:pde})
    is not uniquely determined. To avoid this inconvenience we choose some special solution which is already 
    uniquely determined. To this end we factorise the polynomial $P(\lambda,\zeta)$ as follows
   \begin{equation}
   \label{eq:factorise}
    P(\lambda,\zeta)=P_0(\zeta)(\lambda-\lambda_1(\zeta))^{m_1}\cdots(\lambda-\lambda_l(\zeta))^{m_l}
    =:P_0(\zeta)\widetilde{P}(\lambda,\zeta),
    \end{equation}
   where $\lambda_1(\zeta),\dots,\lambda_l(\zeta)$ are the roots of the characteristic equation
   $P(\lambda,\zeta)=0$ with multiplicity
   $m_1,\dots,m_l$ ($m_1+\cdots+m_l=m$) respectively.
   \par
     Since $\lambda_{\alpha}(\zeta)$ are algebraic functions,
     we may assume that there exist $\kappa\in\NN$ and $r_0<\infty$ such that
   $\lambda_{\alpha}(\zeta)$ are holomorphic functions of the variable $\xi=\zeta^{1/\kappa}$
   (for $|\zeta|\geq r_0$ and $\alpha=1,\dots,l$) and, moreover, there exist $\lambda_{\alpha}\in\CC\setminus\{0\}$ and 
   $q_{\alpha}=\mu_{\alpha}/\nu_{\alpha}$
   (for some relatively prime numbers $\mu_{\alpha}\in\ZZ$ and $\nu_{\alpha}\in\NN$) such that
   $\lambda_{\alpha}(\zeta)\sim\lambda_{\alpha}\zeta^{q_{\alpha}}$ for $\alpha=1,\dots,l$ (i.e.
$\lim_{\zeta\to\infty}\frac{\lambda_{\alpha}(\zeta)}{\zeta^{q_{\alpha}}}=\lambda_{\alpha}$, $\lambda_{\alpha}$ and $q_{\alpha}$ are called
respectively a \emph{leading term} and a \emph{pole order} of $\lambda_{\alpha}(\zeta)$). Observe that $\nu_{\alpha}|\kappa$ for $\alpha=1,\dots,l$.

Following \cite[Definition 13]{Mic8} we define the \emph{pseudodifferential operators $\lambda_{\alpha}(\partial_{z})$} as
\begin{equation}
  \label{eq:lambda}
  \lambda_{\alpha}(\partial_{z})\varphi(z):=\frac{1}{2\kappa\pi i} \oint^{\kappa}_{|w|=\varepsilon}\varphi(w)
  \int_{e^{i\theta}r_0}^{e^{i\theta}\infty}\lambda_{\alpha}(\zeta)
  \mathbf{E}_{1/\kappa}(\zeta^{1/\kappa} z^{1/\kappa})e^{-\zeta w}\,d\zeta\,dw
 \end{equation}
for every $\varphi\in\Oo_{1/\kappa}(D_r)$ and $|z|<\varepsilon < r$, where
$\mathbf{E}_{1/\kappa}(t):=\sum_{n=0}^{\infty}\frac{t^n}{\Gamma(1+n/\kappa)}$
is the \emph{Mittag-Leffler function} of order $1/\kappa$, $\theta\in (-\arg w-\frac{\pi}{2}, -\arg w + \frac{\pi}{2})$ and
$\oint_{|w|=\varepsilon}^{\kappa}$ means that we integrate $\kappa$ times along the positively oriented circle of radius $\varepsilon$.
Here the integration in the inner integral is taken over the ray $\{e^{i\theta}r\colon r\geq r_0\}$.
\par
Under the above assumption, by a \emph{normalised formal solution} $\hat{u}$ of (\ref{eq:pde}) we mean such solution
   of (\ref{eq:pde}), which is also a solution of the pseudodifferential equation
   $\widetilde{P}(\partial_{t},\partial_{z})\hat{u}=0$ (see \cite[Definition 10]{Mic7}).
   \par
Since the principal part of the pseudodifferential operator $\widetilde{P}(\partial_t,\partial_z)$ with respect to $\partial_t$
is given by $\partial_t^m$,
the Cauchy problem (\ref{eq:pde}) has a unique normalised formal power series solution $\hat{u}\in\Oo(D)[[t]]$.

Next, we reduce the Cauchy problem (\ref{eq:pde}) of a general linear partial differential equation with constant coefficients to
a family of the Cauchy problems of simple pseudodifferential equations. Namely we have
\begin{proposition}[{\cite[Theorem 1]{Mic8}}]
\label{pr:family}
Let $\hat{u}$ be the normalised formal solution of (\ref{eq:pde}).
Then
$\hat{u}=\sum_{\alpha=1}^l\sum_{\beta=1}^{m_{\alpha}}\hat{u}_{\alpha\beta}$ with
   $\hat{u}_{\alpha\beta}$ being a formal solution of a simple pseudodifferential equation
   \begin{equation}
   \label{eq:gevrey}
    \left\{
    \begin{array}{l}
     (\partial_{t}-\lambda_{\alpha}(\partial_{z}))^{\beta} \hat{u}_{\alpha\beta}=0\\
     \partial_{t}^j \hat{u}_{\alpha\beta}(0,z)=0\ \ (j=0,\dots,\beta-2)\\
     \partial_{t}^{\beta-1} \hat{u}_{\alpha\beta}(0,z)=\lambda_{\alpha}^{\beta-1}(\partial_{z})
     \varphi_{\alpha\beta}(z),
    \end{array}
    \right.
   \end{equation}
   where $\varphi_{\alpha\beta}(z):=\sum_{j=0}^{m-1}d_{\alpha\beta j}(\partial_{z})
   \varphi_j(z)\in\Oo_{1/\kappa}(D)$ and $d_{\alpha\beta j}(\zeta)$ are some holomorphic
   functions of the variable $\xi=\zeta^{1/\kappa}$ and of polynomial growth. 
   \par
   Moreover, if $q_{\alpha}$ is a pole order of $\lambda_{\alpha}(\zeta)$
   and $\overline{q}_{\alpha}=\max\{0,q_{\alpha}\}$,
   then a formal solution $\hat{u}_{\alpha\beta}$ is a Gevrey series of order $\overline{q}_{\alpha} - 1$
   with respect to $t$.
\end{proposition}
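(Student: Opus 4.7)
The plan is to exploit the commuting factorisation of $\widetilde{P}$ together with a partial fraction decomposition in $\lambda$. First I would observe that the pseudodifferential operators $\lambda_{\alpha}(\partial_{z})$ defined by (\ref{eq:lambda}) act only on the variable $z$, so they commute pairwise and $\widetilde{P}(\partial_{t},\partial_{z})=\prod_{\alpha=1}^{l}(\partial_{t}-\lambda_{\alpha}(\partial_{z}))^{m_{\alpha}}$ as operators, in any order. The symbolic partial fraction identity
\[
\frac{1}{\widetilde{P}(\lambda,\zeta)}=\sum_{\alpha=1}^{l}\sum_{\beta=1}^{m_{\alpha}}\frac{c_{\alpha\beta}(\zeta)}{(\lambda-\lambda_{\alpha}(\zeta))^{\beta}}
\]
then has coefficients $c_{\alpha\beta}(\zeta)$ which are rational in the roots $\lambda_{\gamma}(\zeta)$, and hence holomorphic in $\xi=\zeta^{1/\kappa}$ for $|\zeta|\geq r_0$ and of polynomial growth.

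Next I would define each $\hat{u}_{\alpha\beta}$ as the unique formal power series solution of (\ref{eq:gevrey}), for initial data $\varphi_{\alpha\beta}\in\Oo_{1/\kappa}(D)$ still to be fixed; existence and uniqueness is guaranteed since the principal part of $(\partial_{t}-\lambda_{\alpha}(\partial_{z}))^{\beta}$ with respect to $\partial_{t}$ is $\partial_{t}^{\beta}$. To pin down the $\varphi_{\alpha\beta}$, I impose the matching conditions
\[
\partial_{t}^{j}\Big(\sum_{\alpha,\beta}\hat{u}_{\alpha\beta}\Big)(0,z)=\varphi_{j}(z),\qquad j=0,\dots,m-1,
\]
which form an $m\times m$ linear system in the unknowns $\varphi_{\alpha\beta}$. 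Its invertibility, and the explicit form of the inverse, are encoded by the partial fraction decomposition above (equivalently, take a formal Laplace transform in $t$). This forces $\varphi_{\alpha\beta}=\sum_{j=0}^{m-1}d_{\alpha\beta j}(\partial_{z})\varphi_{j}$ with $d_{\alpha\beta j}(\zeta)$ rational in the $\lambda_{\gamma}(\zeta)$ and the $c_{\alpha\beta}(\zeta)$, hence holomorphic in $\xi=\zeta^{1/\kappa}$ and of polynomial growth. Since each $\hat{u}_{\alpha\beta}$ is annihilated by $(\partial_{t}-\lambda_{\alpha}(\partial_{z}))^{\beta}$ and all remaining factors of $\widetilde{P}$ commute with it, the sum $\sum_{\alpha,\beta}\hat{u}_{\alpha\beta}$ is a formal solution of $\widetilde{P}(\partial_{t},\partial_{z})\hat{u}=0$ with Cauchy data $\varphi_{j}$; uniqueness of the normalised formal solution then yields $\hat{u}=\sum_{\alpha,\beta}\hat{u}_{\alpha\beta}$.

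For the Gevrey estimate, write $\hat{u}_{\alpha\beta}(t,z)=\sum_{n\geq 0}u_{\alpha\beta,n}(z)\,t^{n}/n!$; iterating the equation $(\partial_{t}-\lambda_{\alpha}(\partial_{z}))^{\beta}\hat{u}_{\alpha\beta}=0$, each $u_{\alpha\beta,n}(z)$ is obtained by applying a polynomial of degree $\leq n$ in $\lambda_{\alpha}(\partial_{z})$ to the prescribed initial data. Using (\ref{eq:lambda}) together with $\lambda_{\alpha}(\zeta)\sim\lambda_{\alpha}\zeta^{q_{\alpha}}$ and Cauchy estimates on a shrinking family of discs $D_{r_n}\subset D$, one obtains the key bound $|\lambda_{\alpha}^{n}(\partial_{z})\psi(z)|\leq A\,B^{n}(n!)^{\overline{q}_{\alpha}}$ for $\psi\in\Oo_{1/\kappa}(D)$, which translates exactly into $\hat{u}_{\alpha\beta}\in\Oo_{1/\kappa}(D)[[t]]_{\overline{q}_{\alpha}-1}$.

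The main obstacle is this last analytic estimate. The construction of the $d_{\alpha\beta j}$ is essentially finite-dimensional linear algebra over the field of rational functions in $\xi$, but bounding iterates of the nonlocal operator $\lambda_{\alpha}(\partial_{z})$ by its leading symbol requires a careful deformation of the contour in (\ref{eq:lambda}) and a sharp estimate of the Mittag-Leffler kernel $\mathbf{E}_{1/\kappa}$; this is where the exponent $\overline{q}_{\alpha}-1$ and the truncation $\overline{q}_{\alpha}=\max\{0,q_{\alpha}\}$ originate, since for $q_{\alpha}\leq 0$ the operator $\lambda_{\alpha}(\partial_{z})$ is essentially bounded on $\Oo_{1/\kappa}(D)$ and the series in fact converges.
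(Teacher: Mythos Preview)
The paper does not prove this proposition at all: it is quoted verbatim as \cite[Theorem~1]{Mic8} and used as a black box, so there is no ``paper's own proof'' to compare against. Your sketch is therefore being measured only against the statement and the surrounding framework.

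That said, your outline is the standard route and matches what is done in \cite{Mic8}. The decomposition via partial fractions of $1/\widetilde{P}(\lambda,\zeta)$, the commutativity of the $\lambda_{\alpha}(\partial_{z})$, and the identification of the $d_{\alpha\beta j}(\zeta)$ as rational expressions in the roots (hence holomorphic in $\zeta^{1/\kappa}$ with polynomial growth for $|\zeta|\geq r_0$) are exactly the ingredients used there. Your observation that the Gevrey exponent comes from bounding $|\lambda_{\alpha}^{n}(\partial_{z})\psi|$ by $AB^{n}(n!)^{\overline{q}_{\alpha}}$ via the leading symbol $\lambda_{\alpha}\zeta^{q_{\alpha}}$ is also correct, as is the remark that for $q_{\alpha}\leq 0$ the operator is bounded and the series converges (Gevrey order $\leq 0$). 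The one place where your write-up is genuinely sketchy is the claim ``Cauchy estimates on a shrinking family of discs $D_{r_n}$'': the operator $\lambda_{\alpha}(\partial_{z})$ is not local, and the actual bound in \cite{Mic8} goes through the integral representation (\ref{eq:lambda}) and growth estimates on $\mathbf{E}_{1/\kappa}$ rather than naive Cauchy inequalities, which you acknowledge in your final paragraph. With that caveat, the proposal is sound.
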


For this reason we will study the following simple pseudodifferential equation 
\begin{equation}
 \label{eq:simple}
 \begin{cases}
  (\partial_t-\lambda(\partial_z))^{\beta}u=0\\
  \partial_t^j u(0,z)=0\ (j=0,\dots,\beta-2)\\
  \partial_t^{\beta-1}u(0,z)=\lambda^{\beta-1}(\partial_z)\varphi(z)\in\Oo_{1/\kappa}(D),
 \end{cases}
\end{equation}
where $\lambda(\zeta)\sim \lambda\zeta^q$ for some $q\in\QQ$, $q>1$. So we assume that $q=\mu/\nu$ for some relatively prime $\mu,\nu\in\NN$,
$\mu>\nu$.

\subsection{Summable solutions of simple pseudodifferential equations}
We have the following representation of summable solutions of (\ref{eq:simple}).
\begin{theorem}
\label{th:sum}
 Let $k:=(q-1)^{-1}$ and $d\in\RR$. Suppose that $\hat{u}(t,z)$ is a unique formal power series solution of the Cauchy problem (\ref{eq:simple})
 and 
 \begin{equation}
 \label{eq:condition}
 \varphi(z)\in\Oo_{1/\kappa}^{qk}\big(D\cup\bigcup_{l=0}^{q\kappa-1}S_{(d+\arg\lambda+2l\pi)/q}\big).
 \end{equation}
 Then $\hat{u}(t,z)$ is $k$-summable in the direction $d$ and its $k$-sum is given by
 \begin{gather}
  \label{eq:actual}
  u(t,z)=u^{d}(t,z)=
  \frac{1}{t^{1/q}}\int_{e^{\frac{i{d}}{q}}\RR_+}v(s^q,z)C_q(s/t^{1/q})\,ds,
 \end{gather}
 where 
\begin{equation}
\label{eq:v_est}
v(t,z):=\hat{\Bo}_{1/k}\hat{u}(t,z)=\hat{\Bo}_{1/k}(\sum_{n=0}^{\infty}\frac{u_n(z)}{n!}t^n)=\sum_{n=0}^{\infty}\frac{u_n(z)}{\Gamma(1+qn)}t^n
\in\Oo_{1,1/\kappa}^{q}((D\cup S_d)\times D)
\end{equation}
has the integral representation
\begin{equation}
 \label{eq:v_integral}
  v(t,z)=\frac{t^{\beta-1}}{(\beta-1)!}\partial_t^{\beta-1}
   \frac{1}{2\kappa\pi i}\oint_{|w|=\varepsilon}^{\kappa}\varphi(w)\int_{e^{i\theta}r_0}^{ e^{i\theta}\infty}\mathbf{E}_{q}(t\lambda(\zeta))
    \mathbf{E}_{1/\kappa}(\zeta^{1/\kappa} z^{1/\kappa})e^{-\zeta w}\,d\zeta\,dw
 \end{equation}
 with $\theta\in (-\arg w-\frac{\pi}{2}, -\arg w + \frac{\pi}{2})$.
 Moreover, if $\varphi\in\Oo_{\mathcal{A}}(\CC\setminus H)$ and $z\in D_{\tilde{\varepsilon}}$ for some $\tilde{\varepsilon}>0$
 then the function $t\mapsto v(t,z)$ is holomorphic for $|t|< \frac{(r-\tilde{\varepsilon})^q}{|\lambda|}$, where
 $r:=\min_{1\leq i \leq K}|a_{i1}|$.
\end{theorem}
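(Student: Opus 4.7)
The plan is to construct $\hat u$ explicitly, transport it through $\hat\Bo_{1/k}$ to recognise $v$ as an explicit series, identify that series with a Mittag--Leffler function of the pseudodifferential operator $t\lambda(\partial_z)$, and then use (\ref{eq:lambda}) to rewrite it as the integral (\ref{eq:v_integral}). Summability is then a matter of estimating this integral, and (\ref{eq:actual}) comes from applying $\La_{k,d}$ together with a change of variable.

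First I would verify by direct substitution that
$$
\hat u(t,z)=\frac{t^{\beta-1}}{(\beta-1)!}\lambda^{\beta-1}(\partial_z)e^{t\lambda(\partial_z)}\varphi(z)=\sum_{N\geq\beta-1}\binom{N}{\beta-1}\frac{\lambda^N(\partial_z)\varphi(z)}{N!}\,t^N
$$
is the formal power series solution of (\ref{eq:simple}): the ansatz is forced by the one-variable identity $(\partial_t-a)^{\beta}[t^{\beta-1}e^{at}]=0$, and the initial data pin down the prefactor $\lambda^{\beta-1}(\partial_z)/(\beta-1)!$. Uniqueness of the normalised solution has already been recorded just before the statement. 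Applying $\hat\Bo_{1/k}$ coefficient-wise, so that $N!$ is replaced by $\Gamma(1+qN)$, gives
$$
v(t,z)=\sum_{N\geq\beta-1}\binom{N}{\beta-1}\frac{\lambda^N(\partial_z)\varphi(z)}{\Gamma(1+qN)}\,t^N=\frac{t^{\beta-1}}{(\beta-1)!}\partial_t^{\beta-1}\bigl[\mathbf{E}_q(t\lambda(\partial_z))\varphi(z)\bigr].
$$
To pass to (\ref{eq:v_integral}) I would apply the definition (\ref{eq:lambda}) with the symbol $\mathbf{E}_q(t\lambda(\zeta))$ in place of $\lambda(\zeta)$; the required interchange of sums and integrals is justified by absolute convergence, exactly as in \cite{Mic8}, because $\zeta\mapsto\mathbf{E}_q(t\lambda(\zeta))$ is holomorphic in $\zeta^{1/\kappa}$ for $|\zeta|\geq r_0$ and satisfies the same kind of polynomial bound along the ray $e^{i\theta}\RR_+$ as $\lambda(\zeta)$ itself.

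The main technical step is the analytic continuation and growth control of $v(t,z)$ on $(D\cup S_d)\times D$. Here I would exploit the Mittag--Leffler asymptotics $\mathbf{E}_q(w)\sim q^{-1}e^{w^{1/q}}$ in the principal sector, combined with $\lambda(\zeta)\sim\lambda\zeta^q$, so that the inner integrand in (\ref{eq:v_integral}) behaves like $\exp((t^{1/q}\lambda^{1/q}-w)\zeta)$ for large $|\zeta|$. The assumption (\ref{eq:condition}) is designed precisely so that, as $t$ varies in $S_d$, the $w$-contour can be chosen to encircle the origin while avoiding the singularities of $\varphi$ in $\mathcal{A}$, and the ray direction $\theta$ can be kept in the admissible strip $(-\arg w-\pi/2,-\arg w+\pi/2)$. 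The hardest part of the proof is verifying that all these contour constraints can be met simultaneously and that the resulting Laplace-type estimate is uniform in $z\in D$, so that $v$ lands in the growth class $\Oo_{1,1/\kappa}^{q}((D\cup S_d)\times D)$ needed for $k$-summability.

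Once this is in place, (\ref{eq:actual}) follows by applying $\La_{k,d}$ to $v$: the change of variable $s\mapsto s^q$ turns the Ecalle kernel $C_{(k+1)/k}$ into $C_q$ and the differential $ds^{k/(k+1)}$ into $ds$, producing the stated formula. For the final claim, when $\varphi\in\Oo_{\mathcal{A}}(\CC\setminus H)$ and $z\in D_{\tilde\varepsilon}$, the radius of holomorphy of $t\mapsto v(t,z)$ is read off its explicit series: since $\varphi$ is holomorphic for $|w|<r$, Cauchy estimates yield $|\lambda^N(\partial_z)\varphi(z)|\lesssim|\lambda|^N(qN)!/(r-|z|)^{qN}$, and dividing by $\Gamma(1+qN)\sim(qN)!$ gives radius of convergence $(r-\tilde\varepsilon)^q/|\lambda|$ uniformly for $z\in D_{\tilde\varepsilon}$.
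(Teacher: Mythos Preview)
Your overall route is the paper's: identify $v=\hat\Bo_{1/k}\hat u$ with $\tfrac{t^{\beta-1}}{(\beta-1)!}\partial_t^{\beta-1}\mathbf{E}_q(t\lambda(\partial_z))\varphi$, turn this into the integral (\ref{eq:v_integral}), establish the growth class (\ref{eq:v_est}), and apply $\La_{k,d}$ with the substitution $s\mapsto s^q$ to reach (\ref{eq:actual}). The paper compresses these steps by citing \cite[Proposition~7]{Mic8} for the moment PDE satisfied by $v$, \cite[Lemma~3]{Mic8} for the integral representation, and \cite[Lemma~4]{Mic8} for the analytic continuation and exponential bound; your explicit computation unpacks those references.

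Two points need fixing. First, your justification for (\ref{eq:v_integral}) is incorrect: the symbol $\zeta\mapsto\mathbf{E}_q(t\lambda(\zeta))$ does \emph{not} have polynomial growth on the ray---it grows like $\exp(|t\lambda|^{1/q}|\zeta|)$, so you cannot simply plug it into (\ref{eq:lambda}) as a new symbol. The integral still converges, but only because this exponential is beaten by the factor $e^{-\zeta w}$ when $|t|^{1/q}|\lambda|^{1/q}<\Re(e^{i\theta}w)$; that has to be argued directly (and is precisely what the paper delegates to \cite[Lemma~3]{Mic8}).

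Second, your radius-of-holomorphy argument via Cauchy estimates on $\lambda^N(\partial_z)\varphi$ is heuristic: since $\lambda(\zeta)$ is only asymptotically $\lambda\zeta^q$ and the operator is genuinely pseudodifferential, the bound $|\lambda^N(\partial_z)\varphi(z)|\lesssim|\lambda|^N(qN)!/(r-|z|)^{qN}$ is not a standard Cauchy inequality and would itself require going through the integral representation. The paper takes the cleaner path of working directly with (\ref{eq:v_integral}): bound $|\mathbf{E}_q(t\lambda(\zeta))|\le Ce^{|t\lambda(\zeta)|^{1/q}}$, deform the outer contour from $|w|=\varepsilon$ to $|w|=\tilde r$ for any $\tilde r<r$ (legitimate since $\varphi$ is holomorphic on $|w|<r$), and observe that the inner integral then converges whenever $|\lambda|^{1/q}|t|^{1/q}+\tilde\varepsilon<\tilde r$, which yields the stated radius $(r-\tilde\varepsilon)^q/|\lambda|$.
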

\begin{proof}
 First, observe that by Proposition \ref{pr:family} we get $\hat{u}(t,z)\in \Oo_{1/\kappa}(D)[[t]]_{q-1}$.
 Moreover, by \cite[Proposition 7]{Mic8} the function 
$v(t,z)=\hat{\Bo}_{1/k}\hat{u}(t,z)\in\Oo_{1,1/\kappa}(D^2)$
 satisfies the moment partial differential equation
 \begin{equation}
 \label{eq:v}
  \begin{cases}
   (\partial_{t,\Gamma_q}-\lambda(\partial_z))^{\beta}v=0\\
   \partial_{t,\Gamma_q}^j v(0,z)=0\ (j=0,\dots,\beta-2)\\
  \partial_{t,\Gamma_q}^{\beta-1}v(0,z)=\lambda^{\beta-1}(\partial_z)\varphi(z)\in\Oo_{1/\kappa}(D),
  \end{cases}
 \end{equation}
where $\Gamma_q$ is a moment function defined by $\Gamma_q(n):=\Gamma(1+nq)$ for $n\in\NN_0$ and
$\partial_{t,\Gamma_q}$ is so called \emph{$\Gamma_q$-moment differential operator} defined by (see \cite[Definition 12]{Mic8})
$$\partial_{t,\Gamma_q}\Big(\sum_{n=0}^{\infty}\frac{a_n(z)}{\Gamma_q(n)}t^n\Big):=\sum_{n=0}^{\infty}\frac{a_{n+1}(z)}{\Gamma_q(n)}t^n.$$
Hence by \cite[Lemma 3]{Mic8} with $m_1(n)=\Gamma_q(n)$ and $m_2(n)=\Gamma(1+n)$ we get the integral representation (\ref{eq:v_integral})
of $v(t,z)$.

Since $\varphi(z)$ satisfies (\ref{eq:condition}),
by \cite[Lemma 4]{Mic8} we conclude that 
$v(t,z)\in\Oo_{1,1/\kappa}^{q}((D\cup S_d)\times D)$.
So, the function
$u^{d}(t,z):=\La_{k,d}v(t,z)$ is well-defined and is given by (\ref{eq:actual}).

Since the Mittag-Leffler function is the entire function satisfying the estimation $|\mathbf{E}_{q}(z)|\leq Ce^{|z|^{1/q}}$
(see \cite[Appendix B.4]{Balser}),
the integrand in the inner integral in (\ref{eq:v_integral}) is estimated for $|z|<\widetilde{\varepsilon}$ by
$$
|\mathbf{E}_{q}(t\lambda(\zeta))\mathbf{E}_{1/\kappa}(\zeta^{1/\kappa} z^{1/\kappa})e^{-\zeta w}|\leq 
\tilde{C} e^{|\zeta|(|\lambda|^{1/q}|t|^{1/q}-|w|+\widetilde{\varepsilon})}
$$
as $\zeta\to\infty$, $\arg\zeta= \theta = -\arg w$. By the hypothesis $\varphi(w)$ is holomorphic for $|w|<r$, so we may deform the path of
integration in the outer integral in (\ref{eq:v_integral}) from $|w|=\varepsilon$ to $|w|=\tilde{r}$ for any $\tilde{r}<r$.
It means that the inner integral
in (\ref{eq:v_integral}) is convergent for any $t$ satisfying $|t|<\frac{(r-\widetilde{\varepsilon})^q}{|\lambda|}$
and the function $t\mapsto v(t,z)$ is holomorphic for such $t$.
\end{proof}

\subsection{Hyperasymptotic expansion of solution of simple pseudodifferential equations}
Using the change of variables to (\ref{eq:actual}), as in the case of the heat equation we obtain
 $$ 
u^{\theta}(t,z)=\frac{1}{qt^{1/q}}\int_0^{e^{i\theta}\infty}\frac{1}{s^{1-\frac{1}{q}}}v(s,z)C_q((s/t)^{1/q})\,ds,
$$
so as $t\to 0$, $\arg t=\theta$ we conclude that
 $$
u^{\theta}(t,z)=\frac{1}{q|t|^{1/q}}\int_0^{\infty}\frac{1}{s^{1-\frac{1}{q}}}v(se^{i\theta},z)C_q((s/|t|)^{1/q})\,ds
$$
for any $\theta$ different from the Stokes lines, i.e. $\theta \neq q\lambda_i-\arg\lambda \mod 2\pi$ for $i=1,\dots,K$.

Now we are ready to repeat the construction of the hyperasymptotic expansion for the heat equation
under condition that $\varphi\in\Oo^{kq}_{\mathcal{A}}(\CC\setminus H)$ (i.e. $\varphi\in\Oo_{\mathcal{A}}(\CC\setminus H)$
and $\varphi(z)$ has the exponential growth of order $kq$ as $z\to\infty$, $z\in\CC\setminus H$).
We also assume that the direction $\theta$ is separated from the Stokes lines, i.e. that
$$\theta\in [0,2\pi)\setminus\bigcup_{i=1}^K(q\lambda_i - \arg \lambda-\delta, q\lambda_i - \arg \lambda+ \delta)\mod 2\pi\quad
\textrm{for fixed}\quad\delta>0.
$$

We put $f_0(s,z):=v(se^{i\theta},z)$, $r:=\min\limits_{1\leq i \leq K}|a_{i1}|-\widetilde{\varepsilon}$
and
$$\Omega_0(0,s):=\partial\Big( \{w\in\CC\colon |w|\leq  \frac{r^q}{|\lambda|} - \varepsilon\}\cup
\{w\in\CC\colon |w-s|\leq\frac{\varepsilon}{2}\}\Big)
$$ 
for some $\varepsilon\in \left(0,\frac{r^q}{2|\lambda|}\right)$. 

Observe that by Theorem \ref{th:sum} for any $z\in D_{\widetilde{\varepsilon}}$ the function $w\mapsto f_0(w,z)$
is holomorphic in the domain bounded by $\Omega_0(0,s)$. By \cite[Lemma 2]{Mic7}
$$
u(t,z)=\sum_{j=\beta-1}^{N_0-1}{j \choose \beta-1} \frac{\lambda^j(\partial_z)\varphi(z)}{j!}t^j + R_{N_0}(t,z).
$$
Moreover, as in the case of the heat equation
$$
R_{N_0}(t,z)=\frac{1}{q|t|^{1/q}}\int_0^{\infty}s^{N_0-1+\frac{1}{q}}C_q((s/|t|)^{1/q})f_1(s,z)\,ds,
$$
where $f_1(s,z)$ is defined as in (\ref{f_0_Taylor}).
\par
By (\ref{eq:v_est}) there exist positive constants $A'$ and $B'$ such that
\begin{gather*}
 |f_0(w,z)|\leq A' e^{B'|s|^k}\quad\textrm{for any}\quad w\in\Omega_0(0,s).
\end{gather*}
Hence
\begin{multline*}
|f_{1}(s,z)|\leq \frac{1}{2\pi}\int_{\Omega_{0}\left(0,\,s\right)}\frac{2A'e^{B'|s|^k}}{\varepsilon(\frac{r^q}{|\lambda|}-\varepsilon)^{N_{0}}}\,d|w|
\leq \frac{2A'e^{B'|s|^k}}{\varepsilon(\frac{r^q}{|\lambda|}-\varepsilon)^{N_{0}}}(\frac{r^q}{|\lambda|}-\varepsilon+\frac{\varepsilon}{2})\\
\leq
\frac{2A'r^qe^{B'|s|^k}}{|\lambda|\varepsilon(\frac{r^q}{|\lambda|}-\varepsilon)^{N_{0}}}=
\frac{A_0'e^{B'|s|^k}}{(\frac{r^q}{|\lambda|}-\varepsilon)^{N_{0}}},
\end{multline*}
where $A_0':=\frac{2A'r^q}{|\lambda|\varepsilon}$.

Moreover, by the properties of the Ecalle kernel (see \cite[Lemma 6]{MR}) we may estimate 
\begin{gather*}
 |C_{q}(\tau)|\leq Ce^{-(\tau^{k+1}/c_{q})} \quad \textrm{with} \quad c_{q}=(k+1)^{k+1}k^{-k}.
\end{gather*}

So
\begin{equation}
\label{eq:integrand}
|R_{N_0}(t,z)|\leq \frac{A_0'}{q|t|^{1/q}}
\int_0^{\infty}s^{N_0-1+\frac{1}{q}}e^{-s^k(\frac{1}{c_q|t|^k}-B')}(\frac{r^q}{|\lambda|}-\varepsilon)^{-N_0}\,ds.
\end{equation}
Similarly to the heat equation case we conclude that the integrand of (\ref{eq:integrand}) has a maximum at certain point $s=\sigma_1$
satisfying $N_0=k\sigma_1^k(\frac{1}{c_q|t|^k}-B') +1 - \frac{1}{q}$. Now, the minimum with respect to $\sigma_1$ is given at 
$\sigma_1= \frac{r^q}{|\lambda|} - \varepsilon$. Hence we take
$N_0:=\lfloor k(\frac{r^q}{|\lambda|} - \varepsilon)^k(\frac{1}{c_q|t|^k}-B') +1 - \frac{1}{q} \rfloor$ and
$\sigma_1:=\Big(\frac{N_0+\frac{1}{q}-1}{k(\frac{1}{c_q|t|^k}-B')}\Big)^{1/k}$. Observe that $\sigma_1\leq \frac{r^q}{|\lambda|} - \varepsilon$.
So we are able to use the Laplace method and to conclude that 
\begin{gather*}
|R_{N_0}(t,z)|\sim O\Big(\frac{e^{-\sigma_1^k(\frac{1}{c_q |t|^k}-B')}}{|t|^{1/q}\sqrt{\frac{1}{c_q |t|^k}-B'}}\Big)\quad\textrm{for}\quad
t\to 0,\ \arg t=\theta,\ z\in D_{\tilde{\varepsilon}}.
\end{gather*}

Next, we construct the $n$-level hyperasymptotic expansion as for the heat equation.
The remainder obtained in the $n$-level hyperasymptotic
expansion is of the form
\begin{equation}
\label{eq:RNn}
R_{N_{n}}(t,\,z)=\frac{1}{q|t|^{1/q}}\int_{0}^{\infty}C_q((\frac{s}{|t|})^{\frac{1}{q}})s^{N_{0}-1+\frac{1}{q}}(s-\sigma_{1})^{N_{1}}
\cdots(s-\sigma_{n})^{N_{n}}f_{n+1}(s,z)\,ds,
\end{equation}
where 
\[
f_{n+1}(s,z):=\frac{1}{2\pi i}\int_{\Omega_{n}\left(\sigma_{n},\,s\right)}\frac{f_{n}(w,z)}{(w-\sigma_{n})^{N_{n}}\left(w-s\right)}\,dw.
\]
Here we take
$$\Omega_{n}(\sigma_{n},s):=
\partial\Big(\{ w\in\CC\colon |w-\sigma_{n}|\leq d(\sigma_{n},\theta)-\rho_{n}\varepsilon\}\cup
\{ w\in\CC\colon |w-s|\leq 2^{-n-1}\varepsilon\}\Big)
$$
with $d(\sigma_n,\theta):=\inf_{z\in D_{\tilde{\varepsilon}}}\inf_{\zeta\in H}|\sigma_{n}-e^{-i\theta}\lambda(z-\zeta)^{q}|$
and $\rho_{n}:=2-2^{-n}$. 

Using the same algorithm as in the case of the heat equation, we
can estimate $R_{N_{n}}(t,z)$ as follows
\begin{multline}
\label{eq:RNn_est}
|R_{N_{n}}(t,z)|\leq \frac{A_{n}'}{q|t|^{1/q}}\int_{0}^{\infty}
e^{-s^k(\frac{1}{c_q|t|^k}-B')}\frac{s^{N_{0}-1+\frac{1}{q}}}{(\frac{r^q}{|\lambda|}-\varepsilon)^{N_{0}}}\cdot
\frac{|s-\sigma_{1}|^{N_{1}}}{\left(d(\sigma_{1},\theta)-\rho_{1}\varepsilon\right)^{N_{1}}}\cdot\\
\cdots\frac{|s-\sigma_{n}|^{N_{n}}}{\left(d(\sigma_{n},\theta)-\rho_{n}\varepsilon\right)^{N_{n}}}\,ds
\end{multline}
for a certain constant $A_{n}'$.
\par
Let us observe that the integrand on the right-hand side of
(\ref{eq:RNn}) has $n+1$ maxima in points $s_{1},\ldots,\,s_{n+1}$ such
that $s_{1}<\sigma_{1}<s_{2}<\ldots<\sigma_{n}<s_{n+1}$ and all $s_{j}$
satisfy the condition:
\begin{equation*}
k s_j^{k-1}\left(\frac{1}{c_q|t|^k}-B'\right)
=\frac{N_{0}-1+\frac{1}{q}}{s_{j}}+\frac{N_{1}}{s_{j}-\sigma_{1}}+\ldots+\frac{N_{n}}{s_{j}-\sigma_{n}}.
\end{equation*}
From this, as in the case of the heat equation, we conclude that $s_{j}$ are decreasing functions of $N_{n}$
for $1\leq j\leq n$ and $s_{n+1}$ increases to infinity as $N_{n}\to\infty$.
Similarly, the value of the integrand in (\ref{eq:RNn_est}) in the points $s_j$, $1\leq j\leq n$, decreases with respect to $N_{n}$. 
Moreover, this value in the point $s_{n+1}$ decreases with respect to $N_{n}$ for $s_{n+1}<\sigma_{n}+d(\sigma_{n},\theta)-\rho_{n}\varepsilon$
and increases when $s_{n+1}>\sigma_{n}+d(\sigma_{n},\theta)-\rho_{n}\varepsilon$.

Hence, as in the case of the heat equation there exists $N_{n}\in\mathbb{N}$ and $s_{n+1}$ for which the integrand reaches its minimal value.
We denote such $s_{n+1}$ by $\sigma_{n+1}$.

Again, using the Laplace method we obtain the estimation of $R_{N_n}$
\begin{gather*}
|R_{N_n}(t,z)|\sim O\Big(\frac{e^{-\tilde{\eta}_n^k(\frac{1}{c_q |t|^k}-B')}}{|t|^{1/q}\sqrt{\frac{1}{c_q |t|^k}-B'}}\Big)\quad\textrm{for}\quad
t\to 0,\ \arg t=\theta,\ z\in D_{\tilde{\varepsilon}},
\end{gather*}
where, as previously $\tilde{\eta}_0=\sigma_1\sim \frac{r^q}{|\lambda|}<\tilde{\eta}_1<\tilde{\eta}_2<\tilde{\eta}_3<\dots$ is some increasing
sequence of positive numbers (see \cite{O1a} and \cite{O1b}).

To find the $(n+1)$-level hyperasymptotic expansion we expand the function $s\mapsto f_{n+1}(s,z)$
around the point $\sigma_{n+1}$ as in (\ref{eq:f_n1}),
which, after substituting it in (\ref{eq:RNn}), gives us the ($n+1$)-level
expansion
\begin{multline*}
R_{N_{n}}(t,z)=\frac{1}{q|t|^{1/q}}\sum_{j=0}^{N_{n+1}-1}b_{n+1,j}(z)\int_{0}^{\infty}
C_q((\frac{s}{|t|})^{\frac{1}{q}})s^{N_{0}-1+\frac{1}{q}}(s-\sigma_{1})^{N_{1}}\\
\cdots(s-\sigma_{n})^{N_{n}}(s-\sigma_{n+1})^j\,ds+R_{N_{n+1}}(t,z).
\end{multline*}

Since the Laplace transform $\La_{k,d}$ is inverse to $k$-Borel transform $\hat{\Bo}_k$, we conclude that
$\La_{k,d}(t^l)=\frac{\Gamma(1+ql)}{l!}t^l$ for $l=0,1,\dots$.
It means that
\begin{gather*}
 \frac{1}{q|t|^{1/q}}\int_0^{\infty}\frac{C_q((\frac{s}{|t|})^{\frac{1}{q}})}{s^{1-\frac{1}{q}}}s^l\,ds=\frac{\Gamma(1+ql)}{l!}|t|^l
 =\frac{\Gamma(1+ql)}{l!}e^{-i\theta l}t^l,
\end{gather*}
and using (\ref{eq:polynomial}) we get
$$
R_{N_{n}}(t,z)=\sum_{j=0}^{N_{n+1}-1}b_{n+1,j}(z)\sum_{l=0}^{N_0+\cdots+N_n+j}a_{n,j,l}\frac{\Gamma(1+ql)}{l!}e^{-i\theta l}t^l+
R_{N_{n+1}}(t,z).
$$

Hence, as in the case of the heat equation, we conclude that the hyperasymptotic expansion of $u$ takes also the form (\ref{eq:exp_1})
for some functions $\psi_l(z)$.

Finally, similarly to the heat equation, we get as the conclusion

 \begin{theorem}[Hyperasymptotic expansion for the simple equation]
   For every $n\in\NN$ the solution of the equation (\ref{eq:simple}) with $\varphi\in\Oo^{kq}_{\mathcal{A}}(\CC\setminus H)$
   has the hyperasymptotic expansion as $t$ tends to zero
   in a direction $\theta\in [0,2\pi)\setminus\bigcup_{i=1}^K(q\lambda_i - \arg \lambda-\delta, q\lambda_i - \arg \lambda+ \delta)\mod 2\pi$,
   which has the form
  \begin{multline*}
   u^{\theta}(t,z)=\sum_{j=\beta-1}^{N_0-1}{j \choose \beta-1}\frac{\lambda^j(\partial_z)\varphi(z)}{j!}t^j
   +\sum_{m=1}^n\sum_{j=0}^{N_m-1}
   \frac{b_{m,j}(z)}{|t|^{1/q}}
   \int_0^{\infty}\frac{1}{qs^{1-\frac{1}{q}}}\\
   \cdot C_q((s/|t|)^{1/q})s^{N_0}(s-\sigma_1)^{N_1}\cdots (s-\sigma_{m-1})^{N_{m-1}}(s-\sigma_m)^j\,ds+
   R_{N_n}(t,z)\\
   =\sum_{l=0}^{N_0+\cdots+N_n-1}\psi_l(z)t^l+R_{N_{n}}(t,z),
   \end{multline*}
  where 
  $$b_{m,j}(z)=\frac{1}{j!}\frac{\partial^j}{\partial s^j}f_m(s,z)|_{s=\sigma_m},$$
 \begin{multline*}
  R_{N_n}(t,z)=\frac{1}{|t|^{1/q}}\int_0^{\infty}\frac{1}{qs^{1-\frac{1}{q}}}C_q((\frac{s}{|t|})^{\frac{1}{q}})s^{N_0}(s-\sigma_1)^{N_1}\cdots
 (s-\sigma_{n})^{N_{n}}f_{n+1}(s,z)\,ds,
  \end{multline*}
 \begin{multline*}
 f_m(s,z)=\frac{1}{(2\pi i)^m}\int_{\Omega_{m-1}(\sigma_{m-1},s)}\int_{\Omega_{m-2}(\sigma_{m-2},x_{m-1})}\cdots
 \int_{\Omega_{1}(\sigma_{1},x_2)}\int_{\Omega_{0}(0,x_1)}\\
   \frac{v(x_0e^{i\theta},z)\,dx_0\dots dx_{m-1}}{x_0^{N_0}\big[\prod_{k=1}^{m-1}(x_k-\sigma_k)^{N_k}(x_{k-1}-x_{k})\big](x_{m-1}-s)}
 \end{multline*}
 and $v(s,z)$ is defined by (\ref{eq:v_integral}).
 \par
 Moreover $R_{N_n}(t,z)\sim O\Big(\frac{e^{-\tilde{\eta}_n^k(\frac{1}{c_q |t|^k}-B')}}{|t|^{1/q}\sqrt{\frac{1}{c_q |t|^k}-B'}}\Big)$
 as $t\to 0$, $\arg t = \theta$, $z\in D_{\tilde{\varepsilon}}$
 for some sequence of positive numbers $\tilde{\eta}_0=\sigma_1\sim \frac{r^q}{|\lambda|}<\tilde{\eta}_1<\tilde{\eta}_2<\tilde{\eta}_3<\dots$.
 \end{theorem}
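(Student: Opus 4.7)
The plan is to mimic, step by step, the construction already carried out for the heat equation, replacing the kernel $e^{-s/(4|t|)}/(2\sqrt{\pi|t|s})$ by the Ecalle-kernel factor $C_q((s/|t|)^{1/q})/(q|t|^{1/q}s^{1-1/q})$ and the radius $r^2$ by $r^q/|\lambda|$. The starting point is Theorem~\ref{th:sum}, which both yields the integral representation
$u^{\theta}(t,z)=\frac{1}{q|t|^{1/q}}\int_0^{\infty}s^{-1+1/q}v(se^{i\theta},z)C_q((s/|t|)^{1/q})\,ds$
and, crucially, the domain of holomorphy $|t|<(r-\tilde{\varepsilon})^q/|\lambda|$ of $t\mapsto v(t,z)$. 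Setting $f_0(s,z):=v(se^{i\theta},z)$, this guarantees that $f_0$ is holomorphic in the disc of radius $r^q/|\lambda|$ used to construct $\Omega_0(0,s)$, which is the exact analogue of what was used for the heat equation.

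Next I would write the Taylor formula for $f_0$ with contour $\Omega_0(0,s)$, integrate term-by-term against the Ecalle-kernel factor, and identify the polynomial part of the expansion. The evaluation of the moment integrals $\int_0^{\infty}C_q((s/|t|)^{1/q})s^{l-1+1/q}\,ds$ in terms of powers of $t$ follows from the identity $\La_{k,d}(t^l)=\frac{\Gamma(1+ql)}{l!}t^l$ (a consequence of $\La_{k,d}$ being inverse to $\hat{\Bo}_k$), giving the explicit leading polynomial $\sum_{j=\beta-1}^{N_0-1}\binom{j}{\beta-1}\lambda^j(\partial_z)\varphi(z)t^j/j!$ via \cite[Lemma~2]{Mic7}. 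Iterating the procedure, I would expand $f_{m}(s,z)$ around $\sigma_m$ through $N_m$ terms, with remainder represented by the Cauchy-type integral over $\Omega_m(\sigma_m,s)$; unwinding $f_{n+1}$ as an iterated integral and repeating the two-case geometric argument from the heat-equation section (with $\lambda(z-\zeta)^q$ in place of $(z-\zeta)^2$ in the definition of $d(\sigma_m,\theta)$) shows that $f_0$ is holomorphic throughout the bounded region and therefore the contour deformations are legitimate.

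For the remainder estimate I would combine the bound $|f_0(w,z)|\leq A'e^{B'|s|^k}$ (exponential growth of order $k$ of the Borel transform, guaranteed by $\varphi\in\Oo^{kq}_{\mathcal{A}}(\CC\setminus H)$) with the decay estimate $|C_q(\tau)|\leq Ce^{-\tau^{k+1}/c_q}$ from \cite[Lemma~6]{MR}. After iterating and collecting as in \eqref{eq:RNn_est}, the integrand is of the form $\exp\{-s^k(c_q^{-1}|t|^{-k}-B')\}$ times a polynomial-like expression, so I would locate its $n+1$ local maxima by solving the critical-point equation already derived, choose the optimal $N_n$ and $\sigma_{n+1}$ by the monotonicity analysis in the text (decreasing then increasing behaviour of the value at $s_{n+1}$ as $N_n$ varies), and apply the Laplace method of \cite{Olv} to extract the factor $e^{-\tilde{\eta}_n^k(c_q^{-1}|t|^{-k}-B')}/(|t|^{1/q}\sqrt{c_q^{-1}|t|^{-k}-B'})$.

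The routine obstacle is purely bookkeeping---tracking the constants $A_n'$ and the prefactor in the Laplace approximation through $n$ iterations. The genuine difficulty, inherited from \cite{O1a,O1b}, is controlling the sequence $\tilde{\eta}_n$: monotonicity $\tilde{\eta}_0<\tilde{\eta}_1<\cdots$ follows from the construction (each new $\sigma_{n+1}$ lies beyond the previous $\sigma_n$ and contributes positively to the exponential rate), but establishing unboundedness, and hence genuine hyperasymptotic improvement for all $n$, is not provided here and would be left, as in the heat-equation case, as an open issue. I would therefore state the theorem exactly as in the excerpt, with the qualitative claim that $\{\tilde{\eta}_n\}$ is an increasing sequence of positive numbers starting from $\tilde{\eta}_0=\sigma_1\sim r^q/|\lambda|$, and note explicitly that the integer $N_n$ and the real number $\sigma_{n+1}$ at each level are defined as the minimisers produced by the critical-point analysis above.
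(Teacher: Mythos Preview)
Your proposal is correct and follows essentially the same route as the paper: the paper's Section~3.4 carries out exactly the programme you describe---starting from the integral representation of Theorem~\ref{th:sum}, setting $f_0(s,z)=v(se^{i\theta},z)$ with the holomorphy radius $r^q/|\lambda|$, bounding $|f_0|$ by (\ref{eq:v_est}) and the Ecalle kernel by \cite[Lemma~6]{MR}, choosing $N_0,\sigma_1$ by the critical-point/Laplace analysis, iterating with the contours $\Omega_m$ and the modified distance $d(\sigma_m,\theta)=\inf|\sigma_m-e^{-i\theta}\lambda(z-\zeta)^q|$, and evaluating the resulting moment integrals via $\La_{k,d}(t^l)=\Gamma(1+ql)t^l/l!$. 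Your identification of the open issue (monotonicity but not unboundedness of $\{\tilde\eta_n\}$) also matches the paper's treatment.
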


 \subsection*{Acknowledgements}
The authors would like to thank the anonymous referee for valuable comments, suggestions, and especially for the indication of the
form of hyperasymptotic expansion of the solution $u(t,z)$ presented in (\ref{eq:exp_1}).

\end{document}